\numberwithin{equation}{section}
\newtheorem{theorem}{Theorem}[section]
\newtheorem{lemma}[theorem]{Lemma}
\newtheorem{proposition}[theorem]{Proposition}
\theoremstyle{definition}
\newtheorem{remark}[theorem]{Remark}
\newtheorem{example}[theorem]{Example}
\newtheorem*{acknowledgement}{Acknowledgement}
\newcommand{\C}{\mathbb{C}}
\newcommand{\R}{\mathbb{R}}
\newcommand{\Z}{\mathbb{Z}}
\newcommand{\N}{\mathbb{N}}
\newcommand{\Q}{\mathbb{Q}}
\newcommand{\ud}{\mathrm{d}}
\newcommand{\RN}[1]{%
  \textup{\uppercase\expandafter{\romannumeral#1}}%
}
\begin{document}

\title{An abstract $L^2$ Fourier restriction theorem}

\begin{abstract} An $L^2$ Fourier restriction argument of Bak and Seeger is abstracted to the setting of locally compact abelian groups. This is used to prove new restriction estimates for varieties lying in modules over local fields or rings of integers $\Z/N\Z$. 
\end{abstract}

\author[J. Hickman]{ Jonathan Hickman }
\address{Jonathan Hickman: Eckhart Hall Room 414, Department of Mathematics, University of Chicago, 5734 S. University Avenue, Chicago, Illinois,  60637, US.}
\email{jehickman@uchicago.edu}

\author[J. Wright]{ James Wright }
\address{James Wright: Room 4621, James Clerk Maxwell Building, The King's Buildings, Peter Guthrie Tait Road, Edinburgh, EH9 3FD, UK.}
\email{j.r.wright@ed.ac.uk}

\date{\today}

\subjclass[2010]{42B10, 43A25}




\maketitle

\section{Introduction}\label{introduction section}

Let $n \geq 2$ and $\Sigma \subseteq \R^n$ be a hypersurface and $\mu$ a smooth, compactly supported density on $\Sigma$. Suppose that the Gaussian curvature of $\Sigma$ does not vanish on the support of $\mu$. The classical Stein-Tomas Fourier restriction theorem \cite{Tomas1975, Stein1993} then asserts that the \emph{a priori} estimate\footnote{If $L$ is a list of objects and $X, Y \geq 0$, then the notation $X \lesssim_L Y$ or $Y \gtrsim_L X$ signifies $X \leq C_L Y$ where $C_L$ is a constant which depends only on the objects featured in $L$.} 
\begin{equation*}
\|\hat{f}|_{\Sigma}\|_{L^2(\Sigma, \mu)} \lesssim_{\mu} \|f\|_{L^r(\R^n)} 
\end{equation*}
is valid for all $1 \leq r \leq 2(n+1)/(n+3)$. A large number of variants and generalisations of this important inequality have appeared in the literature. For instance, one may relax the curvature condition on the hypersurface and prove estimates for a restricted range of $r$, or investigate measures supported on surfaces of larger co-dimension \cite{Christ1985, Drury1985, Greenleaf1981, Magyar2009, Ikromov2011, Ikromov2016}. The underlying surface can be removed entirely by working with abstract measures satisfying certain dimensional and Fourier-dimensional hypotheses \cite{Bak2011, Mitsis2002, Mockenhaupt2000}. Restriction theory can also be formulated in alternative algebraic settings, and in particular for varieties lying in vector spaces over finite fields \cite{Iosevich2008, Iosevich2010, Iosevich2010a, Koh2014, Lewko2012, Lewko2015, Mockenhaupt2004}. 

The purpose of this brief note is to formulate an abstract $L^2$ Fourier restriction theorem over a certain class of locally compact abelian (LCA) groups. This result provides a unified approach to many of the generalisations of the Stein-Tomas theorem mentioned above (although it should be noted that it certainly fails to recover the deeper and more intricate results in the field such as those of \cite{Ikromov2016} or \cite{Lewko2015}). Moreover, this abstract formulation allows one to easily develop $L^2$ Fourier restriction theory in new settings such as modules over local fields, their associated quotient rings or rings of integers modulo $N$.

Let $G$ be a LCA group with Haar measure $m$ and suppose $G$ is equipped with a one parameter family of translation-invariant balls $\{B^G_{\rho}(x) : x \in G; \rho > 0\}$. The term `balls' is used loosely here: the $B^G_{\rho}(x)$ are simply open sets and need not arise from a metric. They are required, however, to satisfy the following axioms:
\begin{enumerate}[i)]
\item Nesting: $B^G_{\rho}(0) \subseteq B^G_{\rho'}(0)$ for all $0 < \rho \leq \rho'$;
\item Symmetry: $B^G_{\rho}(0) = -B^G_{\rho}(0)$ for all $0 < \rho$;
\item Covering: $\bigcup_{\rho >0} B^{G}_{\rho}(0) = G$;
\item Translation invariance: $B^G_{\rho}(x) = x + B_{\rho}^G(0)$ for all $x \in G$ and $0 < \rho $.
\end{enumerate}
In addition, it is assumed that the balls satisfy the regularity condition
\begin{flalign*}
&\textrm{(R)}\quad m(B_{\rho}^G(0)) \leq C_1\rho^n \textrm{ for all $0<\rho$.}&
\end{flalign*}
Let $\widehat{G}$ denote the Pontryagin dual group and $\hat{m}$ its Haar measure, which is normalised so that the inversion formula (and hence Plancherel's theorem) hold. Suppose $\widehat{G}$ is also equipped with a family of translation-invariant balls $\{B^{\widehat{G}}_{\rho}(\xi) : \xi \in \widehat{G}; \rho > 0\}$ (that is, a system of open sets satisfying i) - iv) above) and, further, that there is a system of real-valued Borel functions $\{\varphi_{\rho}\}_{\rho >0}$ on $G$ such that $\varphi_{\rho} = 1$ on $B^G_{\rho}(0)$, $\mathrm{supp}(\varphi_{\rho}) \subset B^G_{2\rho}(0)$, $\|\varphi_{\rho}\|_{L^{\infty}(G, m)} \leq 1$ and
\begin{flalign*}
&\textrm{(F)}\quad |\hat{\varphi}_{\rho}(\xi)| \leq C_2s^{-n} \textrm{ whenever $-\xi \notin B^{\widehat{G}}_s(0)$ and $s \geq 1/\rho$.}&
\end{flalign*}
Here $\hat{\varphi}_{\rho}$ denotes the Fourier transform of $\varphi_{\rho}$, given by
\begin{equation*}
\hat{\varphi}_{\rho}(\xi) := \int_{G} \varphi_{\rho}(x) \xi(-x)\,\ud m(x) \qquad \textrm{for all characters $\xi \in \widehat{G}$.}
\end{equation*}
In addition to this pointwise estimate for the $\hat{\varphi}_{\rho}$, it is also convenient to assume uniform $L^1(\widehat{G})$-boundedness; explicitly,
\begin{flalign*}
&\textrm{(F$'$)}\quad \int_{\widehat{G}}|\hat{\varphi}_{\rho}(\xi)|\,\ud \hat{m}(\xi) \leq C_3.&
\end{flalign*}
The $\{\varphi_{\rho}\}_{\rho >0}$ can be used to construct a system of operators which can be thought of as `smooth Littlewood-Paley projections'. As such, when all of the above criteria are satisfied, the ensemble $(G, \{B^G_{\rho}\}, \{B^{\widehat{G}}_{\rho}\}, \{\varphi_{\rho}\})$ is referred to as a \emph{Littlewood-Paley system}.

\begin{example}\label{Euclidean example}
The prototypical example is, of course, given by $G = \R^n$ (so that $\widehat{G} \cong \R^n$) and taking the system of balls and dual balls to be simply those induced by the Euclidean metric. Here the Haar measure is Lebesgue measure and condition (R) is immediately satisfied with $C_1 \lesssim_n 1$. A system of projections is given by taking a radially decreasing Schwartz function $\varphi$ satisfying $\varphi(x) = 1$ for $x \in B$ and $\mathrm{supp}(\varphi) \subset 2B$, where $B\subseteq \R^n$ is the unit ball, and defining $\varphi_{\rho}(x) := \varphi(\rho^{-1}x)$ for all $x \in \R^n$; the conditions (F) and (F$'$) are then readily verified with $C_2, C_3 \lesssim_n 1$. The $\varphi_{\rho}$ define a system of Littlewood-Paley projections in the classical sense and applying the forthcoming analysis to this example recovers the results (and methods) of \cite{Bak2011}. Note that one cannot take $\varphi_r$ to be the sharp cutoff function $\chi_{B_{\rho}^G(0)}$: indeed, the lack of regularity of $\chi_{B_{\rho}^G(0)}$ leads to poor Fourier decay estimates. For comparison, in discrete and non-archimedean settings, as considered below, characteristic functions of balls are smooth (in the sense that they admit favourable Fourier decay-type estimates) and in these cases one may take $\varphi_{\rho} := \chi_{B_{\rho}^G(0)}$.
\end{example}

\begin{example}\label{finite field example} If $G = \mathbb{F}_q^n$ is a vector space over a finite field (so that $\widehat{G} \cong \mathbb{F}_q^n$), then one may define
\begin{equation*}
B_{\rho}^G(x) := \left\{\begin{array}{ll}
\emptyset & \textrm{if $0 < \rho < 1$}\\
\{x\} & \textrm{if $1 \leq \rho < q$} \\
\mathbb{F}_q^n & \textrm{if $q \leq \rho < \infty$} 
\end{array}\right. ; \quad B_{\rho}^{\widehat{G}}(\xi) := \left\{\begin{array}{ll}
\emptyset & \textrm{if $0 < \rho < 1/q$}\\
\{\xi\} & \textrm{if $1/q \leq \rho < 1$} \\
\mathbb{F}_q^n & \textrm{if $1 \leq \rho < \infty$} 
\end{array}\right. .
\end{equation*}
Here the Haar measure is counting measure on $G$ and condition (R) is immediately satisfied with $C_1 = 1$. A system of projections is given by $\varphi_{\rho} := \chi_{B_{\rho}^G(0)}$ for all $\rho >0$. The conditions (F) and (F$'$) can be easily verified with $C_2 = C_3 = 1$, noting that here the Haar measure on $\widehat{G}$ is normalised to have mass 1. 
\end{example}

Further examples are discussed in $\S$\ref{examples section}. From Example \ref{finite field example} above one observes that, in general, it is important that the families of balls $\{B^G_{\rho}\}, \{B^{\widehat{G}}_{\rho}\}$ do not necessarily arise from a metric, or even a pseudo-metric. This will also be the case in the basic application where the underlying LCA group $G$ arises from a ring of integers modulo $N$. 

The main result of the article is an abstract $L^2$ restriction theorem for Littlewood-Paley systems. In particular, restriction with respect to some finite (positive) measure $\mu$ on $\widehat{G}$ is investigated. Analogously to the results in the Euclidean setting \cite{Mockenhaupt2000, Mitsis2002, Bak2011}, one assumes that the measure $\mu$ satisfies both a dimensional (or regularity) and Fourier-dimensional hypothesis; in particular, for some $0 < b \leq a < n$ assume the following hold:
\begin{flalign*}
&\textrm{(R$\mu$)}\quad \mu(B_{\rho}^{\widehat{G}}(\xi)) \leq A\rho^a \textrm{ for all $\xi \in \widehat{G}$, and}&
\end{flalign*}
\begin{flalign*}
&\textrm{(F$\mu$)}\quad |\check{\mu}(x)| \leq B\rho^{-b/2} \textrm{ for all $x \notin B^G_{\rho}(0)$.}&
\end{flalign*}
With the various definitions now in place, the main theorem is as follows.

\begin{theorem}\label{abstract theorem} Let $(G, \{B^G_{\rho}\}, \{B^{\widehat{G}}_{\rho}\}, \{\varphi_{\rho}\})$ be a Littlewood-Paley system, $0 < b \leq a < n$ and suppose $\mu$ is a finite measure on $\widehat{G}$ satisfying $(\mathrm{R}\mu)$ and $(\mathrm{F}\mu)$. Then
\begin{equation}\label{abstract Stein Tomas}
\|\hat{f}\|_{L^2(\mu)} \leq C_r\|f\|_{L^r(G)}
\end{equation}
holds for all $1 \leq r \leq r_0$ where 
\begin{equation}\label{exponent formula}
r_0 := \frac{4(n - a) + 2b}{4(n-a) + b}.
\end{equation}
Furthermore, the constant $C_r$ in \eqref{abstract Stein Tomas} depends only on $r, n, C_1, C_2, C_3, A, B, a$ and $b$. 
\end{theorem}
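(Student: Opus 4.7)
The plan is to reproduce the Bak--Seeger dyadic decomposition argument, with the Euclidean Littlewood--Paley projections replaced by the abstract ones built into the hypothesis. First, by duality \eqref{abstract Stein Tomas} is equivalent to the extension estimate $\|\widehat{g\mu}\|_{L^{r'}(G)} \leq C_r \|g\|_{L^2(\mu)}$, which in turn, by a standard $TT^*$ manoeuvre, reduces to the convolution estimate
\begin{equation*}
\|f \ast \check{\mu}\|_{L^{r'}(G)} \leq C_r^2 \|f\|_{L^r(G)}.
\end{equation*}
This is the form I would actually work with.

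Next, I decompose $\check{\mu}$ by means of the given projections: set $\eta_j := \varphi_{2^{j+1}} - \varphi_{2^j}$ for $j \geq 0$ together with a base piece $\eta_{-1} := \varphi_1$, so that $\sum_j \eta_j = 1$ pointwise by the covering and nesting axioms, and write $\check{\mu} = \sum_j K_j$ with $K_j := \check{\mu} \cdot \eta_j$. The support of $\eta_j$ sits outside $B^G_{2^j}(0)$ for $j \geq 0$, so hypothesis $(\mathrm{F}\mu)$ immediately yields the pointwise bound $\|K_j\|_{L^\infty} \lesssim B\, 2^{-jb/2}$, and hence the convolution bound $\|K_j \ast f\|_{L^\infty} \lesssim 2^{-jb/2}\|f\|_{L^1}$. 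For the companion $L^2 \to L^2$ bound, I compute $\hat{K}_j(\xi) = \int_{\widehat{G}} \hat{\eta}_j(\xi - \zeta)\,\ud\mu(\zeta)$ and estimate $\|\hat{K}_j\|_{L^\infty}$ by partitioning the $\mu$-integral into the dual ball $B^{\widehat{G}}_{2^{-j}}(\xi)$ and its dyadic complementary shells. On the inner ball, (F) applied at $s = 2^{-j}$ gives $|\hat{\eta}_j| \lesssim 2^{jn}$, and $(\mathrm{R}\mu)$ controls the $\mu$-mass by $A\, 2^{-ja}$; on the $k$-th shell, (F) improves the pointwise bound to $2^{-kn}\cdot 2^{jn}$ while $(\mathrm{R}\mu)$ gives $\mu$-mass at most $A (2^k/2^j)^a$. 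Summing the resulting geometric series in $k$ (which converges because $n > a$) produces $\|\hat{K}_j\|_{L^\infty} \lesssim 2^{j(n-a)}$, hence $\|K_j \ast f\|_{L^2} \lesssim 2^{j(n-a)} \|f\|_{L^2}$.

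Complex interpolation between these two endpoint bounds gives $\|K_j \ast f\|_{L^{r'}} \lesssim 2^{j\varepsilon(r)} \|f\|_{L^r}$ with
\begin{equation*}
\varepsilon(r) \;=\; -\tfrac{b}{2}\Bigl(1 - \tfrac{2}{r'}\Bigr) + (n-a)\cdot\tfrac{2}{r'} \;=\; -\tfrac{b}{2} + \tfrac{b + 2(n-a)}{r'},
\end{equation*}
and a short algebraic check shows $\varepsilon(r) < 0$ precisely when $r < r_0$, with $\varepsilon(r_0) = 0$. For any $r < r_0$ the geometric series in $j$ converges, yielding \eqref{abstract Stein Tomas} in the open range. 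To reach the endpoint $r = r_0$, I would invoke Bourgain's real interpolation trick (as in \cite{Bak2011}): partition $f$ into level sets, apply the subcritical estimates with gain $\varepsilon(r)$ for $r$ slightly below $r_0$, and balance the resulting bounds against the trivial $L^1 \to L^\infty$ estimate $\|\check{\mu} \ast f\|_{L^\infty} \leq \mu(\widehat{G})\|f\|_{L^1}$. Hypothesis $(\mathrm{F}')$ enters here to guarantee that the tail piece and any truncation errors are harmless.

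The main obstacle is the $L^2 \to L^2$ estimate for $K_j$: this is where all the abstract axioms must cooperate at once. In particular, the fact that the $B^{\widehat{G}}_\rho$ need not come from a metric forces one to rely solely on nesting, translation invariance, and the quantitative pointwise decay (F) to carry through the shell decomposition; keeping track of the constants so that they depend only on $n, C_1, C_2, C_3, A, B, a, b$ as claimed is where care is required. By contrast, the $L^\infty$ and interpolation steps are essentially formal once the two endpoint estimates are in hand.
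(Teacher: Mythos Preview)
Your dyadic decomposition $\check{\mu} = \sum_j K_j$ with $K_j = \eta_j \check{\mu}$ is the classical Mockenhaupt--Mitsis route and correctly yields the open range $1 \le r < r_0$; the paper instead uses Carbery's single-scale split $\check{\mu} = \varphi_\rho \check{\mu} + (1-\varphi_\rho)\check{\mu}$ and optimises over $\rho$. (One small slip: the bound $|\hat\eta_j| \lesssim 2^{jn}$ on the inner dual ball comes from (R) via Riemann--Lebesgue, not from (F).) Either way one arrives at the restricted weak-type endpoint $\|\hat\chi_E\|_{L^2(\mu)} \lesssim m(E)^{1/r_0}$; the substantive issue is how to reach the \emph{strong} endpoint $r = r_0$.

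Here there is a genuine gap. The ``Bourgain interpolation trick'' you describe---decomposing $f$ by level sets and balancing subcritical estimates against the trivial $L^1\to L^\infty$ bound---is the standard summation lemma for families of operators with geometric growth/decay; applied to your $K_j$ it yields only $L^{r_0,1}(G)\to L^{r_0',\infty}(G)$ for $f\mapsto f*\check\mu$, hence only $L^{r_0,1}(G)\to L^2(\mu)$ for the restriction operator. This is exactly the content of the paper's Lemma~\ref{restricted weak type lemma}, not of the theorem. The Bak--Seeger mechanism for upgrading is different and is the real point: having the restricted weak-type bound in hand, one returns to the convolution estimate and writes
\[
\|T_1\chi_E\|_{L^2}^2 \;=\; \int |\hat\chi_E|^2 |\mu_1|^2 \;\le\; \|\mu_1\|_\infty \int_{\widehat G} |\hat\varphi_\rho(\xi)| \Big(\int_{\widehat G} |\hat\chi_E(\xi+\eta)|^2\,d\mu(\eta)\Big) d\hat m(\xi),
\]
and it is precisely \emph{here} that (F$'$) enters, to control $\int|\hat\varphi_\rho|$, while the inner integral is bounded by the restricted weak-type estimate just proved. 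This bootstrap produces a genuinely improved $L^2$ bound for $T_1\chi_E$, which after a second optimisation in $\rho$ and real interpolation gives $L^{r_0,2}(G)\to L^2(\mu)$ (Proposition~\ref{convolution}). Your sketch does not contain this feedback step, and your stated role for (F$'$) (``tail piece and truncation errors'') is not where it actually matters.
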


\begin{remark} \begin{enumerate}[1)] \item The proof will in fact show that the Fourier transform satisfies a stronger $L^{r_0,2}(G)-L^{2}(\mu)$ inequality.
\item One may extract an explicit value for the constant appearing in the statement of Theorem \ref{abstract theorem} from the proof presented below. In particular, for $r = r_0$ one may take $C = \bar{C}^{1/2}$ where $\bar{C}$ is a constant of the form
\begin{equation}\label{complicated constant}
\bar{C} = C_{n,a,b}(C_1 + C_2)^{1-\theta} C_3^{(1-\theta)/(2-\theta)} A^{1-\theta} B^{\theta}
\end{equation}
for the exponent $\theta$ given by
\begin{equation}\label{theta}
\theta := \frac{2(n-a)}{2(n-a)+b}.
\end{equation}
In view of applications it is useful to track (at least roughly) the dependence on the constants. This is particularly relevant when considering Fourier restriction in discrete settings such as finite fields, or rings of integers $\Z/N\Z$. In these cases one wishes to prove estimates that are `essentially' independent of the cardinality of the underlying field or ring: see $\S$\ref{examples section}. 
\end{enumerate}
\end{remark}

The theorem is proved by adapting the arguments of \cite{Bak2011} so as to be applicable in the abstract setting of Littlewood-Paley systems. 

This article is structured as follows: in $\S$\ref{examples section} some examples of groups and measures satisfying the hypotheses of the theorem are discussed; the proof of Theorem \ref{abstract theorem} is then given in $\S$\ref{Fourier proof section}. The note concludes with a discussion of some related estimates for the convolution operators $f\mapsto f\ast \mu$ in $\S$\ref{convolution section}.

\begin{acknowledgement} This material is based upon work supported by the National Science Foundation under Grand No. DMS-1440140 whilst the first author was in residence at the Mathematical Sciences Research Institute in Berkeley, California, during the Spring 2017 semester.  
\end{acknowledgement}




\section{Examples}\label{examples section}

In this section examples of Littlewood-Paley systems are discussed, together with some prototypical measures $\mu$ that satisfy (R$\mu$) and (F$\mu$) with favourable values of $a, b, A$ and $B$. For simplicity, the discussion is restricted to measures supported on smooth surfaces or algebraic varieties.

\subsection*{Euclidean spaces} Theorem \ref{abstract theorem} generalises the existing abstract restriction theory of Mockenhaupt \cite{Mockenhaupt2000}, Mitsis \cite{Mitsis2002} and Bak and Seeger \cite{Bak2011}. 


\subsection*{Vector spaces over finite fields} Theorem \ref{abstract theorem} also generalises the basic finite field version of the Stein-Tomas theorem due to Mockenhaupt and Tao \cite{Mockenhaupt2004}. Here it is important to observe that the constants $C_1, C_2, C_3$ can all be chosen independently of the cardinality of the underlying finite field. 
%

\subsection*{Vector spaces over $\Q_p$} Let $p$ be a fixed odd prime and consider the field of $p$-adic numbers $\Q_p$ with $p$-adic absolute value $|\,\cdot\,|_p$. The vector space $G:=\Q_p^n$ is self-dual as a LCA group and both $G$ and $\widehat{G}$ are endowed with the family of (clopen) balls $B_{\rho}(x) := \{x \in \Q_p^n : \|x\| \leq \rho\}$ induced by the $\ell^{\infty}$-norm
\begin{equation*}
\|x\| := \max_{1 \leq j \leq n}|x_j|_p \quad \textrm{for all $x = (x_1, \dots, x_n) \in \Q_p^n$.}
\end{equation*}
The Haar measures on both $\Q_p^n$ and the dual group are normalised so that the unit ball $\Z_p^n := \{x \in \Q_p^n : \|x\| \leq 1\}$ has measure 1. The regularity property (R) then holds with $C_1 = 1$.

Fix an additive character $e \colon \Q_p \to \mathbb{T}$ such that $e$ restricts to the constant function 1 on $\Z_p$ and to a non-principal character on $p^{-1}\Z_p$. Then for any integrable $f \colon \Q_p^n \to \C$ the Fourier transform $\hat{f}$ is given by
\begin{equation*}
\hat{f}(\xi) := \int_{\Q_p^n} f(x)e(- x \cdot \xi ) \,\ud m(x) \qquad \textrm{for all $\xi \in \Q_p^n$.}
\end{equation*}
In particular, defining $\varphi_{\rho} := \chi_{B_{\rho}(0)}$ for $\rho > 0$ one may easily verify that $\hat{\varphi}_{\rho}(\xi) = p^{-n\nu} \chi_{B_{p^{\nu}}(0)}(\xi)$ where $\nu \in \Z$ is the smallest integer such that $p^{-\nu} \leq \rho$. The conditions (F) and (F$'$) immediately follow with $C_2 = C_3 = 1$. It is remarked that, by the non-archimedean nature of the absolute value, the $\varphi_{\rho}$ are smooth functions on $\Q_p^n$ and, moreover, are natural $p$-adic analogues of Schwartz functions (in particular, they belong to the \emph{Schwartz-Bruhat} class of functions on $\Q_p^n$: see  \cite{Bruhat1961, Osborne1975, Taibleson1975}).

Let $h  \colon \Z_p^{n-1} \to \Z_p$ be the mapping given by $h(\omega) := \omega_1^2 + \dots + \omega_{n-1}^2$ and consider the paraboloid 
\begin{equation*}
\Sigma := \{(\omega, h(\omega)) : \omega \in \Z_p^{n-1}\} \subseteq \Q_p^n.
\end{equation*}
Take $\mu$ to be the measure on $\Sigma$ given by the push-forward of the Haar measure on $\Z_p^{n-1}$ under the graphing function $\omega \mapsto (\omega, h(\omega))$. The condition (R$\mu$) is readily verified for $\mu$ with $A = 1$ and $a=n-1$. On the other hand, the inverse Fourier transform of the measure is given by $\check{\mu}(x) = \prod_{j=1}^{n-1} G(x_j, x_n)$ where
\begin{equation*}
G(a,b) := \int_{\Z_p} e(at + bt^2)\,\ud t \qquad \textrm{for $a, b \in \Q_p$.}
\end{equation*}
The integral $G(a,b)$ can written in terms of classical Gauss sums and thereby evaluated or, alternatively, one may analyse $G(a,b)$ directly using the basic algebraic properties of the character $e$. In either case, it is not difficult to deduce that 
\begin{equation*}
|G(a,b)|  = \left\{\begin{array}{ll}
|b|_p^{-1/2} & \textrm{if $|a|_p \leq |b|_p$}\\
0 & \textrm{otherwise}
\end{array} \right. \qquad \textrm{for all $a, b \in \Q_p$ with $\max\{|a|_p,|b|_p\} > 1$.} 
\end{equation*}
From these observations it follows that 
\begin{equation*}
|\check{\mu}(x)| \leq \|x\|^{-(n-1)/2} \qquad \textrm{for all $x \in \Q_p^n$}
\end{equation*}
and therefore (F$\mu$) holds with $B=1$ and $b = n-1$. 

Combining these observations with Theorem \ref{abstract theorem} produces a $p$-adic variant of the classical Stein-Tomas theorem, which shares the $r_0 = 2(n+1)/(n+3)$ numerology of the Euclidean case.

\subsection*{Modules over rings of integers $\Z/p^{\alpha}\Z$} Let $N \in \N$ and consider the module $G := [\Z/N\Z]^n$. For $k \in \N$ define a function $\|\,\cdot\,\| \colon [\Z/N\Z]^k \to \N$ by setting 
\begin{equation*}
\|\vec{x}\| := \frac{N}{\gcd(x_1, \dots, x_k, N)} \qquad \textrm{for all $\vec{x} = (x_1, \dots, x_k) \in [\Z/N\Z]^k$.} 
\end{equation*}
The image of this `norm' is thought of as a set of available scales in the module. It is natural to compare the scales under the division ordering $\preceq$, defined by $a \preceq b$ for $a,b\in \N$ if and only if $a \mid b$. One may isolate the elements lying at a given scale by defining
\begin{equation*}
\mathcal{B}_d := \big\{ \vec{x} \in [\Z/N\Z]^n : \|\vec{x}\| \preceq d \big\} \quad \textrm{for all divisors $d \mid N$.}
\end{equation*}

Fix an odd prime $p$ and now specialise to the case $N = p^{\alpha}$ for some $\alpha \in \N$. In this situation the set of available scales is given by $\{1, p, \dots, p^{\alpha}\}$, which is totally ordered under $\preceq$. Define a collection of balls on $G:=[\Z/p^{\alpha}\Z]^n$ by $B_{\rho}^G(\vec{x}\,) := \vec{x} + \mathcal{B}_{p^{\nu}}$ where $0 \leq \nu \leq \alpha$ is the largest value for which $p^{\nu} \leq \rho$ (if $0 < \rho < 1$, then $B_{\rho}^G(\vec{x}\,) := \emptyset$). These balls do not arise from a metric, but nevertheless the satisfy the crucial properties i) - iv) listed in the introduction. Furthermore, the Haar measure on $[\Z/p^{\alpha}\Z]^n$ is simply counting measure and the regularity property (R) therefore holds with $C_1 = 1$. A system of dual balls on $\widehat{G}$ is given by $B_{\rho}^{\widehat{G}}(\vec{\xi}\,) := \vec{\xi} + \mathcal{B}_{p^{\alpha-\nu}}$ where now $0 \leq \nu \leq \alpha$ is the smallest value for which $p^{\nu} \geq 1/\rho$ (if $0 < \rho < p^{-\alpha}$, then $B_{\rho}^{\widehat{G}}(\vec{\xi}\,) := \emptyset$). Taking $\varphi_{\rho} := \chi_{B_{\rho}^G(\vec{0}\,)}$ the properties (F) and (F$'$) are both easily seen to hold with $C_2 = C_3 = 1$. 
 
Let $h \colon [\Z/p^{\alpha}\Z]^{n-1}  \to \Z/p^{\alpha}\Z$ be the mapping given by $h(\vec{\omega}) := \omega_1^2 + \dots + \omega_{n-1}^2$ and consider the paraboloid
\begin{equation*}
\Sigma := \{(\vec{\omega}, h(\vec{\omega})) : \vec{\omega} \in [\Z/p^{\alpha}\Z]^{n-1}\} \subseteq [\Z/p^{\alpha}\Z]^{n}.
\end{equation*}
If $\mu$ denotes the normalised counting measure on $\Sigma$, then it is immediate that $\mu$ satisfies (R$\mu$) with $A=1$ and $a = n-1$. The Fourier transform
\begin{equation*}
\check{\mu}(\vec{x}\,) = \frac{1}{p^{(n-1)\alpha}} \sum_{\omega \in [\Z/p^{\alpha}\Z]^{n-1}} e^{2\pi i ( x'\cdot \vec{\omega} + x_nh(\vec{\omega}))/p^{\alpha}} \quad \textrm{for $\vec{x} = (x',x_n) \in [\Z/p^{\alpha}\Z]^{n}$}
\end{equation*}
can be evaluated via the classical formulae for Gauss sums. In particular, it is not difficult to show that
\begin{equation*}
|\check{\mu}(\vec{x}\,)| \leq \|\vec{x}\,\|^{-(n-1)/2},
\end{equation*}
which implies that (F$\mu$) holds with $B=1$ and $b=n-1$. See \cite{Hickman} for further details.

Applying Theorem \ref{abstract theorem}, one deduces that the inequality
\begin{equation*}
\Big(\frac{1}{\#\Sigma}\sum_{\vec{\xi} \in \Sigma} |\hat{F}(\vec{\xi}\,)|^2 \Big)^{1/2} \lesssim_n \|F\|_{\ell^r([\Z/p^{\alpha}\Z]^n)}
\end{equation*}
holds for all $1 \leq r \leq 2(n+1)/(n+3)$. The important observation here is that the implied constant in this estimate is independent of both $p$ and $\alpha$ (and therefore the cardinality of the underlying ring).\footnote{Indeed, the inequality 
\begin{equation*}
\Big(\frac{1}{\#\Sigma}\sum_{\xi \in \Sigma} |\hat{F}(\xi)|^s \Big)^{1/s} \lesssim_{n,p,\alpha} \|F\|_{\ell^r([\Z/p^{\alpha}\Z]^n)}
\end{equation*}
trivially holds for all Lebesgue exponents $1 \leq r, s \leq \infty$ (with a constant which now depends on $p$ and $\alpha$) as a consequence of the Riemann-Lebesgue lemma and the equivalence of norms on finite-dimensional vector spaces.} Thus, this result is a $[\Z/p^{\alpha}\Z]^n$-analogue of certain finite field restriction estimates of Mockenhaupt and Tao \cite{Mockenhaupt2004}. 

The analysis of this discrete example has many similarities with the continuous $p$-adic example. In fact, restriction theory over $\Q_p$ is equivalent to restriction theory over $[\Z/p^{\alpha}\Z]^n$ in a precise sense. In particular, there is a `correspondence principle', which is a manifestation of the uncertainty principle, that allows one to `lift' restriction problems over the discrete rings $\Z/p^{\alpha}\Z$ to the continuous setting of $\Q_p$. This lifting procedure is discussed in detail in \cite{Hickman2015} and \cite{Hickman}.

\subsection*{Vector spaces over local fields and modules over their quotient rings} The two previous examples can be generalised to the setting of non-archimedean local fields. Let $K$ be a field with a discrete non-archimedean absolute value $|\,\cdot\,|_K$, suppose $\pi \in K$ is a choice of uniformiser and let $\mathfrak{o}:= \{x \in K : |x|_K \leq 1\}$ denote the ring of integers of $K$. Assume that the residue class field $\mathfrak{o}/\pi \mathfrak{o}$ is finite. For the details of the relevant definitions see, for instance, \cite{Lang1984}, \cite{Lang1994} or \cite{Taibleson1975}. Generalising the $p$-adic example, any finite-dimensional vector space $K^n$ can be endowed with a natural Littlewood-Paley system by taking the balls to be those induced by the $\ell^{\infty}$-norm on $K^n$ and the projections $\varphi_{\rho}$ to be characteristic functions of balls. It is remarked that, by the non-archimedean nature of the absolute value, these $\varphi_{\rho}$ are in fact smooth functions. Similarly, generalising the $\Z/p^{\alpha}\Z$ example, for each $\alpha \in \N$ the module $[\mathfrak{o}/\pi^{\alpha}\mathfrak{o}]^n$ can also be endowed with a natural Littlewood-Paley system. The restriction theories over the vector space $K^n$ and over the modules $[\mathfrak{o}/\pi^{\alpha}\mathfrak{o}]^n$ are in some sense equivalent via a correspondence principle which extends that described above. The details may be found in \cite{Hickman2015}. 

It is well-known that any field $K$ satisfying the above properties is isomorphic to either a finite extension of $\Q_p$ for some prime $p$ or the field $\mathbb{F}_q((X))$ of formal Laurent series over a finite field $\mathbb{F}_q$. The local fields $\mathbb{F}_q((X))$ are particularly well-behaved spaces which act as simplified models of Euclidean space. For instance, Fourier analysis over $\mathbb{F}_2((X))$ corresponds to the study of Fourier-Walsh series, which has played a prominent r\^ole as a model for problems related to Carleson's theorem and time-frequency analysis \cite{DiPlinio2014, Do2012}. Recently there has been increased interest in local field variants of other problems in Euclidean harmonic analysis and geometric measure theory, focusing on the Kakeya conjecture \cite{Ellenberg2010, Caruso, Dummit2013, Fraser2016, Hickman}. This has stemmed from Dvir's solution \cite{Dvir2009} to Wolff's finite field Kakeya conjecture, which has led to progress on the original Euclidean problem \cite{Guth2010, Carbery2013, Guth2016}. It is natural to also consider local field analogues of the restriction problem; this topic is investigated further in \cite{Hickman2015, Hickman}.

\subsection*{Modules over rings of integers $\Z/N\Z$} Let $N \in \N$ and consider the ring of integers $\Z/N\Z$. If $N$ is not a power of a fixed prime, but has multiple distinct prime factors, then the set of available scales for $\Z/N\Z$ is only partially ordered under $\preceq$. This introduces additional difficulties when one attempts to generalise the constructions described in the $\Z/p^{\alpha}\Z$ case. In particular, in order to ensure the nesting property, the balls $B_{\rho}^G(\vec{x}\,)$ and $B_{\rho}^{\widehat{G}}(\vec{\xi}\,)$ in $[\Z/N\Z]^n$ are now defined by
\begin{equation*}
B_{\rho}^G(\vec{x}\,) : = \vec{x} + \bigcup_{d\mid N : d \leq \rho} \mathcal{B}_d \quad \textrm{and} \quad B_{\rho}^{\widehat{G}}(\vec{\xi}\,) : = \vec{\xi} + \bigcup_{d\mid N : d \geq 1/\rho} \mathcal{B}_{N/d}.
\end{equation*} 
The verification of the properties (R), (F) and (F$'$) for these balls is more involved and an $\varepsilon$-loss in $N$ must, in general, be included in the constants. The details are discussed in \cite{Hickman}, where a theory of Fourier restriction over such rings of integers is systematically developed. The partially ordered scale structure on $\Z/N\Z$ tends to make the analysis more involved in this setting than over $\R^n$ (where the scales are, of course, totally ordered), and typically the arguments require additional number-theoretic input \cite{Hickman2, Hickman}.




\section{Proof of Theorem \ref{abstract theorem}}\label{Fourier proof section}

The main ingredient is a convolution inequality.

\begin{proposition}\label{convolution} For $G$ and $\mu$ as in the statement of Theorem \ref{abstract theorem} define $Tf := f \ast \check{\mu}$. If
\begin{equation*}
\sigma := \frac{2(n-a+b)(2(n-a)+b)}{2(n-a+b)(2(n-a)+b) -b^2}, \qquad \tau := \frac{2(n-a+b)}{b},
\end{equation*}
then whenever $\sigma < p < \tau'$ and $q$ satisfies $1/p - 1/q = 2(n-a)/(2(n-a) + b)$, the estimate 
\begin{equation*}
\|Tf\|_{L^{q,s}(G)} \lesssim_{p,s} \bar{C}\|f\|_{L^{p,s}(G)}
\end{equation*}
holds for any $0 < s \leq \infty$. Here $\bar{C}$ is the expression appearing in \eqref{complicated constant}.
\end{proposition}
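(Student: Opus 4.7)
My approach is to adapt the argument of Bak and Seeger \cite{Bak2011} to the abstract Littlewood-Paley setting: decompose $\check\mu$ dyadically in space, prove $L^1 \to L^\infty$ and $L^2 \to L^2$ bounds on each dyadic piece, and reassemble via a restricted weak-type interpolation at the critical exponents, extending to the full Lorentz scale by Marcinkiewicz real interpolation.

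For the decomposition I would set $\psi_0 := \varphi_1$ and $\psi_k := \varphi_{2^k} - \varphi_{2^{k-1}}$ for $k \geq 1$. Axioms (i)--(iv) give the telescoping identity $\sum_{k \leq K} \psi_k = \varphi_{2^K} \to 1$ pointwise on $G$, together with $|\psi_k| \leq 2$ and $\mathrm{supp}\,\psi_k \subseteq B_{2^{k+1}}^G(0) \setminus B_{2^{k-1}}^G(0)$ for $k \geq 1$. Writing $T = \sum_k T_k$ with $T_k f := f \ast (\psi_k \check\mu)$, the bound $\|T_k\|_{L^1 \to L^\infty} \leq \|\psi_k \check\mu\|_{L^\infty(G)} \lesssim B \cdot 2^{-kb/2}$ is immediate from (F$\mu$) applied on $\mathrm{supp}\,\psi_k \subseteq G \setminus B_{2^{k-1}}^G(0)$. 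For the $L^2$ bound, Plancherel reduces to estimating $\|T_k\|_{L^2 \to L^2} \leq \|\widehat{\psi_k} \ast \mu\|_{L^\infty(\widehat G)}$; I would control this pointwise by splitting the convolution into a main part over $B_{2^{-k}}^{\widehat G}(\xi)$, on which the trivial estimate $|\widehat{\psi_k}| \leq \|\psi_k\|_{L^1(G)} \lesssim C_1 \cdot 2^{kn}$ from (R) combines with (R$\mu$) to yield $\lesssim C_1 A \cdot 2^{k(n-a)}$, plus dyadic tails at scales $2^j \geq 2^{-k}$ on which (F) supplies the decay $|\widehat{\psi_k}| \lesssim C_2 \cdot 2^{-jn}$ and (R$\mu$) the mass $\lesssim A \cdot 2^{ja}$; the $j$-sum converges geometrically since $a < n$, producing $\|T_k\|_{L^2 \to L^2} \lesssim (C_1 + C_2)\, A \cdot 2^{k(n-a)}$.

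With $\alpha_k := B \cdot 2^{-kb/2}$ and $\beta_k := (C_1 + C_2)\,A \cdot 2^{k(n-a)}$, the interpolated norms $\alpha_k^{1-t}\beta_k^t$ become independent of $k$ precisely at $t_c := b/(2(n-a) + b)$, equivalently $1 - t_c = \theta$ as in \eqref{theta}. Straight summation at $t_c$ diverges logarithmically, so I would instead argue by restricted weak-type: for $E \subseteq G$ with $m(E) = t$ and $\lambda > 0$, split $T\chi_E = \sum_{k \leq K} T_k\chi_E + \sum_{k > K} T_k\chi_E$, and choose $K$ so that the high-frequency tail, bounded by $\sum_{k > K} \alpha_k t \lesssim \alpha_K t$ in $L^\infty$, is below $\lambda/2$; Chebyshev applied to the low-frequency part, using $\|\sum_{k \leq K} T_k\chi_E\|_2 \lesssim \beta_K t^{1/2}$, followed by optimisation in $K$, then yields $T \colon L^{p_0,1} \to L^{q_0,\infty}$ at the critical pair with $1/p_0 - 1/q_0 = \theta$. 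Applying this at two nearby critical pairs in the range $\sigma < p < \tau'$ and invoking Marcinkiewicz real interpolation upgrades the conclusion to strong-type $L^{p,s} \to L^{q,s}$ for any $0 < s \leq \infty$.

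The main obstacle is the endpoint book-keeping: $t_c$ is exactly the parameter at which naive triangle summation fails, so the restricted weak-type structure (equivalently, the Lorentz-space upgrade) is essential to recover the sharp constant \eqref{complicated constant}. The $L^1(\widehat G)$-normalisation (F$'$) enters through auxiliary estimates such as $\|\psi_k\|_{L^2(G)}^2 \leq \|\widehat{\psi_k}\|_{L^\infty(\widehat G)}\|\widehat{\psi_k}\|_{L^1(\widehat G)} \lesssim C_1 C_3 \cdot 2^{kn}$, which, when tracked through the final interpolation, account for the $C_3^{(1-\theta)/(2-\theta)}$ factor in $\bar C$.
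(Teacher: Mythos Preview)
There is a genuine gap. From the dyadic bounds $\|T_k\|_{L^1 \to L^\infty} \lesssim B\,2^{-kb/2}$ and $\|T_k\|_{L^2 \to L^2} \lesssim (C_1+C_2)A\,2^{k(n-a)}$, Bourgain's summation lemma produces exactly \emph{one} restricted weak-type point on the critical line $1/p - 1/q = \theta$, namely $T \colon L^{r_0,1}(G) \to L^{r_0',\infty}(G)$, where $(r_0,r_0')$ is the self-dual point on that line. Self-adjointness of $T$ maps this point to itself, so there is no second endpoint on the critical line from which to launch real interpolation; your appeal to ``two nearby critical pairs'' is not supported by the estimates you have established. You therefore obtain only $\|\hat f\|_{L^2(\mu)} \lesssim \|f\|_{L^{r_0,1}(G)}$, which falls short both of the range $\sigma < p < \tau'$ in the proposition and of the $L^{r_0,2}$ restriction estimate the paper records.

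The paper reaches two distinct endpoints $(\sigma,\tau)$ and $(\tau',\sigma')$ on the critical line by a \emph{bootstrap}. First, the continuous-scale decomposition $\check\mu = \varphi_\rho\check\mu + (1-\varphi_\rho)\check\mu$ gives the restricted weak-type restriction estimate $\|\hat\chi_E\|_{L^2(\mu)} \lesssim m(E)^{1/r_0}$ (Lemma~\ref{restricted weak type lemma}, essentially equivalent to your single Bourgain point). This is then fed back into the $L^2$ bound on $T_1 f = f \ast (\varphi_\rho\check\mu)$: writing
\[
\|T_1\chi_E\|_{L^2(G)}^2 \lesssim \rho^{n-a}\int_{\widehat G}\int_{\widehat G} |\hat\chi_E(\xi+\eta)|^2\,\ud\mu(\eta)\,|\hat\varphi_\rho(\xi)|\,\ud\hat m(\xi),
\]
the inner $\mu$-integral is controlled by Lemma~\ref{restricted weak type lemma} and the outer integral by (F$'$), producing the \emph{improved} bound $\|T_1\chi_E\|_{L^2(G)} \lesssim m(E)^{1/r_0}\rho^{(n-a)/2}$ in place of the cruder $m(E)^{1/2}\rho^{n-a}$. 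Optimising in $\rho$ against $\|T_2\chi_E\|_{L^\infty(G)} \lesssim m(E)\rho^{-b/2}$ now yields restricted weak-type $(\sigma,\tau)$ with $\sigma \neq r_0$; duality supplies $(\tau',\sigma')$, and real interpolation between these two distinct points gives the open interval. This bootstrap is precisely where (F$'$) and the constant $C_3$ enter---your suggested route for $C_3$ via $\|\psi_k\|_{L^2}^2$ plays no role in the bounds you actually use.
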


Theorem \ref{abstract theorem} is an immediate consequence of this estimate. 

\begin{proof}[Proof (of Theorem \ref{abstract theorem})] Note that $(p, q) := (r_0, r_0')$ satisfies the hypotheses of Proposition \ref{convolution}. By the Lorentz space version of H\"older's inequality together with a duality argument,
\begin{align*}
\int_{\widehat{G}}|\hat{f}(\xi)|^2\,\ud \mu(\xi) &= \int_{G} f(x) \overline{Tf(x)} \,\ud m(x) \\
&\leq \|f\|_{L^{r_0,2}(G)} \|Tf\|_{L^{r_0',2}(G)} \lesssim_{p,s} \bar{C}\|f\|_{L^{r_0,2}(G)}^2.
\end{align*}
Interpolating against the trivial $L^1(G)-L^{\infty}(\mu)$ inequality concludes the proof.
\end{proof}

Turning to the proof of Proposition \ref{convolution}, the first step is, in fact, to prove the restricted weak-type version of the estimate \eqref{abstract Stein Tomas} for $r = r_0$. This is achieved via (an abstraction of) an $L^2$ restriction argument due to A. Carbery. The weak version of the Stein-Tomas theorem can then be applied to bound the convolution operator.

\begin{lemma}\label{restricted weak type lemma} Under the hypotheses of Theorem \ref{abstract theorem},\footnote{In fact, the hypotheses can be slightly weakened: here the symmetry property ii) of the balls and $L^1$ estimate (F$'$) are not required.} the restricted weak-type estimate
\begin{equation*}
\|\hat{\chi}_E\|_{L^2(\mu)} \lesssim_{n,a}(C_1 + C_2)^{(1-\theta)/2} A^{(1-\theta)/2}B^{\theta/2} \|\chi_E\|_{L^{r_0}(G)}
\end{equation*}
holds for all Borel sets $E \subset G$.
\end{lemma}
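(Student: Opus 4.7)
The plan is to adapt the Bak--Seeger dyadic kernel decomposition to the abstract Littlewood--Paley setting. A standard Plancherel computation rewrites
\[
\|\hat{\chi}_E\|_{L^2(\mu)}^2 = \int_E T\chi_E \,\ud m, \qquad T\chi_E := \chi_E \ast \check{\mu},
\]
reducing the task to bounding $\int_E |T\chi_E|\,\ud m$. I would fix a scale $\rho > 0$ (to be optimised) and split the kernel as $\check{\mu} = K_\rho + K^\rho$, where $K_\rho := \check{\mu}\varphi_\rho$ and $K^\rho := \check{\mu}(1 - \varphi_\rho)$. Since $\varphi_\rho \equiv 1$ on $B^G_\rho(0)$ and $\|\varphi_\rho\|_\infty \leq 1$, hypothesis $(\mathrm{F}\mu)$ yields the pointwise estimate $\|K^\rho\|_\infty \leq 2B\rho^{-b/2}$, whence Young's inequality produces the easy bound $\|\chi_E \ast K^\rho\|_\infty \leq 2B\rho^{-b/2}|E|$.

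For the low-frequency piece I would use Plancherel: a direct computation shows $\widehat{K_\rho} = \mu \ast \hat{\varphi}_\rho$, so
\[
\|\chi_E \ast K_\rho\|_2 \leq \|\mu \ast \hat{\varphi}_\rho\|_\infty \cdot |E|^{1/2}.
\]
The crux, and principal obstacle, is the bound $\|\mu \ast \hat{\varphi}_\rho\|_\infty \lesssim_{n,a}(C_1+C_2) A \rho^{n-a}$. My plan is to fix $\xi \in \widehat{G}$, partition $\widehat{G}$ via the dyadic regions
\[
D_0 := B^{\widehat{G}}_{1/\rho}(\xi), \qquad D_j := B^{\widehat{G}}_{2^j/\rho}(\xi) \setminus B^{\widehat{G}}_{2^{j-1}/\rho}(\xi) \quad (j \geq 1),
\]
and estimate $\int \hat{\varphi}_\rho(\xi - \eta)\,\ud\mu(\eta)$ on each region. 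On $D_0$ one has the trivial bound $|\hat{\varphi}_\rho| \leq \|\varphi_\rho\|_{L^1} \leq 2^n C_1 \rho^n$ (from $\mathrm{supp}\,\varphi_\rho \subseteq B^G_{2\rho}(0)$ and $(\mathrm{R})$); on $D_j$ axiom $(\mathrm{F})$ applies with $s = 2^{j-1}/\rho$, since translation invariance (iv) and the definition of $D_j$ guarantee $-(\xi - \eta) \notin B^{\widehat{G}}_s(0)$, giving $|\hat{\varphi}_\rho(\xi - \eta)| \leq C_2(2^{j-1}/\rho)^{-n}$. Combining these with $\mu(D_j) \leq A(2^j/\rho)^a$ from $(\mathrm{R}\mu)$ and summing the resulting geometric series, which converges precisely because $a < n$, delivers the claim. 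The delicate point is that this decomposition must rely solely on the axioms (i)--(iv) together with $(\mathrm{R})$, $(\mathrm{R}\mu)$ and $(\mathrm{F})$, since the balls need not arise from a metric; neither symmetry (ii) nor $(\mathrm{F}')$ enters.

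To conclude, a standard level-set argument assembles the pieces. For each $\lambda > 0$, choose $\rho = \rho(\lambda)$ by $2B\rho^{-b/2}|E| = \lambda/2$; then $|\chi_E \ast K^\rho| \leq \lambda/2$ pointwise, while Chebyshev applied to the $L^2$ bound for $\chi_E \ast K_\rho$ gives
\[
m\bigl(\{|T\chi_E| > \lambda\}\bigr) \lesssim_{n,a,b} (C_1+C_2)^2 A^2 B^{4(n-a)/b}|E|^{1 + 4(n-a)/b}\lambda^{-2 - 4(n-a)/b}.
\]
Substituting into the layer-cake representation
\[
\int_E |T\chi_E|\,\ud m \leq \lambda_0|E| + \int_{\lambda_0}^\infty m\bigl(\{|T\chi_E| > \lambda\}\bigr)\,\ud\lambda
\]
and optimising $\lambda_0$ to equate the two terms should yield exactly the exponent $2/r_0$ on $|E|$ together with the interpolated prefactor $(C_1+C_2)^{1-\theta} A^{1-\theta}B^{\theta}$ for $\theta$ as in \eqref{theta}; taking square roots produces the claimed inequality.
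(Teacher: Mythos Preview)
Your argument is correct and matches the paper closely: the decomposition $\check\mu = \check\mu\,\varphi_\rho + \check\mu\,(1-\varphi_\rho)$, the dyadic-shell estimate $\|\mu\ast\hat\varphi_\rho\|_{L^\infty(\widehat G)}\lesssim_{n,a}(C_1+C_2)A\rho^{n-a}$, and the trivial bound $\|\check\mu(1-\varphi_\rho)\|_{L^\infty(G)}\leq 2B\rho^{-b/2}$ are exactly what the paper proves. The only difference is in the final assembly. Rather than your layer-cake argument with a $\lambda$-dependent scale, the paper simply applies Cauchy--Schwarz and H\"older to the pairing
\[
\|\hat\chi_E\|_{L^2(\mu)}^2 \leq \|T_1\chi_E\|_{L^2(G)}\,m(E)^{1/2} + \|T_2\chi_E\|_{L^\infty(G)}\,m(E) \lesssim_{n,a} (C_1+C_2)A\,m(E)\,\rho^{n-a} + B\,m(E)^2\rho^{-b/2}
\]
and optimises a single $\rho$ to balance the two terms. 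This yields the same constants with less work; the distributional estimate you derive is in effect a restricted weak-type $(r_0,r_0')$ bound for $T$, which is stronger than what this lemma actually requires.
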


\begin{proof} Decompose the measure $\mu$ by writing $\mu = \mu_1 + \mu_2$ where 
\begin{equation}\label{mu decomposition}
\check{\mu}_1 :=  \varphi_{\rho}\cdot\check{\mu} \quad \textrm{and} \quad \check{\mu}_2 := (1-\varphi_{\rho}) \cdot \check{\mu}
\end{equation}
for fixed value of $\rho > 0$ chosen so as to satisfy the later requirements of the proof. Thus, $T = T_1 + T_2$ where $T_jf := f \ast \check{\mu}_j$ for $j = 1,2$. 

Fixing a Borel set $E \subseteq \widehat{G}$ observe, by duality and H\"older's inequality, that
\begin{equation}\label{restricted weak type 1}
\int_{\widehat{G}} |\hat{\chi}_E(\xi)|^2\,\ud \mu(\xi) \leq  \|T_1\chi_E\|_{L^{2}(G)}m(E)^{1/2} + \|T_2\chi_E\|_{L^{\infty}(G)}m(E).
\end{equation}
Since $\mu_1 = \hat{\varphi}_{\rho} \ast \mu$, it follows that
\begin{align*}
\mu_1(\xi) &= \int_{B^{\widehat{G}}_{1/\rho}(\xi)} \hat{\varphi}_{\rho}(\xi - \eta)\,\ud \mu(\eta) + \sum_{k=1}^{\infty} \int_{B^{\widehat{G}}_{2^k/\rho}(\xi) \setminus B^{\widehat{G}}_{2^{k-1}/\rho}(\xi)} \hat{\varphi}_{\rho}(\xi - \eta)\,\ud \mu(\eta)\\
&=: \RN{1} + \RN{2}
\end{align*}
Applying the Riemann-Lebesgue estimate $\|\hat{\varphi}_{\rho}\|_{L^{\infty}(\widehat{G})} \leq m(B^G_{2\rho}(0))$ together with the hypotheses (R) and (R$\mu$), one deduces that
\begin{equation*}
|\RN{1}| \leq m\big(B^G_{2\rho}(0)\big) \mu\big(B^{\widehat{G}}_{1/\rho}(\xi)\big) \leq 2^nC_1A\rho^{n-a}.
\end{equation*}
Furthermore, for any $k \in \N$ the condition (F) implies that 
\begin{equation*}
|\hat{\varphi}_{\rho}(\xi - \eta)| \leq C_2 2^{-(k-1)n}\rho^{n} \qquad \textrm{for all $\eta \notin B^{\widehat{G}}_{2^{k-1}/\rho}(\xi)$}
\end{equation*}
and so
\begin{equation*}
|\RN{2}| \leq C_2\bigg(\sum_{k=1}^{\infty} 2^{-(k-1)n}\mu\big(B^{\widehat{G}}_{2^k/\rho}(\xi)\big)\bigg) \rho^{n} 
\leq 2^nC_2\bigg(\sum_{k=1}^{\infty} 2^{-(n-a)k}\bigg)A \rho^{n-a}.
\end{equation*}
Combining these observations,
\begin{equation}\label{mu1 estimate}
\|\mu_1\|_{L^{\infty}(\widehat{G})} \leq 2^n\big(C_1+(2^{n-a}-1)^{-1}C_2\big) A\rho^{n-a} \lesssim_{n,a} (C_1 + C_2)A\rho^{n-a}
\end{equation}
and so
\begin{equation}\label{mu1 estimate 2}
\|T_1\chi_E\|_{L^2(G)} = \|\mu_1\hat{\chi}_E\|_{L^2(\widehat{G})} \lesssim_{n,a} (C_1 + C_2)Am(E)^{1/2}\rho^{n-a}.
\end{equation}

On the other hand, since $\mathrm{supp}(1 - \varphi_{\rho}) \subseteq G \setminus B^G_{\rho}(0)$, it follows from (F$\mu$) that 
\begin{equation}\label{mu2 estimate}
\|\check{\mu}_2\|_{L^{\infty}(G)} \leq 2B\rho^{-b/2}
\end{equation}
 and hence
\begin{equation}\label{restricted weak type 3}
\|T_2\chi_E\|_{L^{\infty}(G)} \leq \|\check{\mu}_2\|_{L^\infty(G)}\|\chi_E\|_{L^1(\widehat{G})} \lesssim Bm(E)\rho^{-b/2}.
\end{equation}

Combining \eqref{restricted weak type 1}, \eqref{mu1 estimate 2} and \eqref{restricted weak type 3} one concludes that
\begin{equation*}
\|\hat{\chi}_E\|_{L^2(\mu)}^2 \lesssim_{n,a} (C_1 + C_2)Am(E)\rho^{n-a} + Bm(E)^2\rho^{-b/2}
\end{equation*}
Thus, choosing $\rho$ so that $\rho^{n-a+b/2} \sim_{n,a} (C_1+C_2)^{-1}A^{-1}B m(E)$ and recalling the definition \eqref{theta}, the desired inequality follows.
\end{proof}

\begin{proof}[Proof (of Proposition \ref{convolution})] Since $T$ is essentially self-adjoint\footnote{In particular, $T^*g = g \ast \check{\tilde{\mu}}$ where $\tilde{\mu}$ is the measure on $\widehat{G}$ given by $\tilde{\mu}(E) := \mu(-E)$ for all Borel sets $E \subseteq \widehat{G}$. Note (R$\tilde{\mu}$) and (F$\tilde{\mu}$) hold if and only if (R$\mu$) and (F$\mu$) hold with the same constants $A$ and $B$ and so the subsequent arguments apply equally to $T$ and $T^*$.}, it suffices to show that $T$ is of restricted weak-type $(\sigma, \tau)$. Indeed, it then follows that $T$ also of restricted weak-type $(\tau', \sigma')$ and the desired result is then deduced by interpolating between these estimates. 

Decompose $T = T_1 + T_2$ as above; although the same notation is used, it is understood that this decomposition is made with respect to a new value of $\rho$, chosen so as to satisfy the later requirements of the proof. Applying \eqref{mu1 estimate}, one observes that
\begin{align}
\nonumber
\|T_1\chi_E\|^2_{L^2(G)} &= \int_{\widehat{G}} |\hat{\chi}_E(\xi)|^2|\mu_1(\xi)|^2\,\ud \hat{m}(\xi) \\
\nonumber
&\lesssim_{n,a} (C_1 + C_2)A\rho^{n-a} \int_{\widehat{G}} |\hat{\chi}_E(\xi)|^2|\mu_1(\xi)|\,\ud \hat{m}(\xi) \\
\nonumber
&\leq (C_1 + C_2)A\rho^{n-a} \int_{\widehat{G}}\int_{\widehat{G}} |\hat{\chi}_E(\xi+\eta)|^2\ud\mu(\eta)|\hat{\varphi}_{\rho}(\xi)|\,\ud \hat{m}(\xi) \\
\label{improved T_1}
&\lesssim_{n,a,b} (C_1 + C_2)^{2-\theta}C_3A^{2-\theta}B^{\theta} m(E)^{2/r_0}\rho^{n-a},
\end{align}
where the final inequality is due to Lemma \ref{restricted weak type lemma} and (F$'$). If $F \subseteq G$ is any Borel set, then
\begin{align*}
\langle T\chi_E\,,\, \chi_F \rangle &= \langle T_1\chi_E\,,\, \chi_F \rangle +\langle T_2\chi_E\,,\, \chi_F \rangle \\
&\leq \|T_1\chi_E\|_{L^2(G)}m(F)^{1/2} + \|T_2\chi_E\|_{L^{\infty}(G)}m(F).
\end{align*}
Thus, as a consequence of \eqref{restricted weak type 3} and \eqref{improved T_1}, the right-hand side of the above expression is dominated by
\begin{equation*}
(C_1 + C_2)^{1-\theta/2}C_3^{1/2}A^{1-\theta/2}B^{\theta/2} m(E)^{1/r_0}m(F)^{1/2}\rho^{(n-a)/2} +Bm(E)m(F)\rho^{-b/2}.
\end{equation*}
Choosing 
\begin{equation*}
\rho^{(n-a +b)/2} \sim_{n,a,b} ((C_1 + C_2)AB^{-1})^{-(1-\theta/2)}C_3^{-1/2}m(E)^{1/r_0'}m(F)^{1/2}
\end{equation*}
yields the estimate
\begin{equation*}
\langle T\chi_E\,,\, \chi_F \rangle \leq \bar{C} m(E)^{1/\sigma}m(F)^{1/\tau'}
\end{equation*}
where $\sigma, \tau$ and $\bar{C}$ are as in the statement of the proposition. In particular, $T$ is of restricted weak-type $(\sigma, \tau)$, as required.
\end{proof}




\section{Some remarks on convolution operators}\label{convolution section}

Recall that Theorem \ref{abstract theorem} was a direct consequence of an estimate for the convolution operator $f \mapsto \check{\mu} \ast f$. Using the method of proof of Lemma \ref{restricted weak type lemma}, one may also obtain $L^q(\widehat{G})-L^r(\widehat{G})$ estimates for the related convolution operator $f \mapsto \mu \ast f$.

\begin{lemma}\label{abstract convolution lemma}
Let $(G, \{B^G_r\}, \{B^{\widehat{G}}_r\}, \{\varphi_r\})$ be a Littlewood-Paley system, $0 < b \leq a < n$ and suppose $\mu$ is a probability measure on $\widehat{G}$ satisfying $(\mathrm{R}\mu)$ and $(\mathrm{F}\mu)$. If $\mathcal{T}$ denotes the closed triangle with vertices $\{(0,0), (1,1), (1/r_0,1/s_0)\}$ where
\begin{equation*}
r_0 := \frac{2(n-a) + b}{n-a+b} \qquad s_0 := \frac{2(n-a)+b}{n-a},
\end{equation*}
then
\begin{equation}\label{abstract convolution}
\|f \ast \mu\|_{L^s(\widehat{G})} \leq C\|f\|_{L^r(\widehat{G})}
\end{equation}
holds whenever $(1/r,1/s) \in \mathcal{T}\setminus\{(1/r_0, 1/s_0)\}$. Furthermore, the constant $C$ depends only on $n, C_1, C_2, A, B, a$ and $b$.
\end{lemma}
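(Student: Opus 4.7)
The plan is to adapt the bilinear decomposition from the proof of Lemma~\ref{restricted weak type lemma}, now carried out on $\widehat{G}$ rather than on $G$, so as to obtain a restricted weak-type $(r_0,s_0)$ estimate for $f \mapsto f \ast \mu$. Once that is in hand, the trivial bounds $\|f \ast \mu\|_{L^p(\widehat{G})} \leq \|f\|_{L^p(\widehat{G})}$ for $p = 1,\infty$ (both of norm at most $\|\mu\| = 1$, since $\mu$ is a probability measure) combined with real interpolation will deliver \eqref{abstract convolution} throughout $\mathcal{T}$ except at the endpoint vertex $(1/r_0,1/s_0)$ itself.

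For a parameter $\rho > 0$ to be specified, I would decompose $\mu = \mu_1 + \mu_2$ exactly as in \eqref{mu decomposition}. The computation in the proof of Lemma~\ref{restricted weak type lemma} already supplies the two pointwise bounds needed here: $\|\mu_1\|_{L^\infty(\widehat{G})} \lesssim_{n,a} (C_1+C_2)A\rho^{n-a}$ from (R), (F) and (R$\mu$), and $\|\check{\mu}_2\|_{L^\infty(G)} \leq 2B\rho^{-b/2}$ from (F$\mu$). For Borel sets $E, F \subseteq \widehat{G}$ of finite measure, split
$$\langle \chi_E \ast \mu, \chi_F \rangle = \langle \chi_E \ast \mu_1, \chi_F \rangle + \langle \chi_E \ast \mu_2, \chi_F \rangle,$$
estimating the first term trivially by $\|\mu_1\|_{L^\infty(\widehat{G})}|E||F|$ and the second, via Cauchy-Schwarz followed by Plancherel on $\widehat{G}$ (under which convolution by $\mu_2$ corresponds to multiplication by $\check{\mu}_2$ on $G$), by $\|\check{\mu}_2\|_{L^\infty(G)}|E|^{1/2}|F|^{1/2}$. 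Combining the two yields
$$\langle \chi_E \ast \mu, \chi_F \rangle \lesssim_{n,a} (C_1+C_2)A\rho^{n-a}|E||F| + B\rho^{-b/2}|E|^{1/2}|F|^{1/2}.$$

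Balancing the two terms singles out $\rho^{n-a+b/2} \sim ((C_1+C_2)A)^{-1}B(|E||F|)^{-1/2}$, and substitution produces a bound of the form $C(|E||F|)^{(n-a+b)/(2(n-a)+b)}$. A direct check of the definitions gives $1/r_0 = (n-a+b)/(2(n-a)+b)$ and $1/r_0 + 1/s_0 = 1$, so this is precisely $C|E|^{1/r_0}|F|^{1/s_0'}$, i.e.\ the restricted weak-type $(r_0, s_0)$ inequality for $f \mapsto f \ast \mu$. Marcinkiewicz interpolation between this endpoint bound and each of the two strong-type vertices $(0,0)$ and $(1,1)$ then yields the strong-type estimate \eqref{abstract convolution} at every point of $\mathcal{T}$ other than $(1/r_0,1/s_0)$, the latter being excluded because interpolation with a single restricted weak-type endpoint only gives strong-type bounds strictly between the interpolating endpoints. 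I do not foresee any serious obstacle: the argument is a direct transcription of the classical Mockenhaupt--Bak--Seeger method into the Littlewood-Paley framework developed in $\S\ref{Fourier proof section}$, the only modest point requiring care being the bookkeeping around the above endpoint exclusion.
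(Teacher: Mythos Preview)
Your proposal is correct and follows essentially the same route as the paper: reduce to a restricted weak-type $(r_0,s_0)$ estimate via the bilinear splitting $\mu=\mu_1+\mu_2$, bound the $\mu_1$ piece by $\|\mu_1\|_{L^\infty(\widehat{G})}|E||F|$ using \eqref{mu1 estimate}, bound the $\mu_2$ piece by $\|\check{\mu}_2\|_{L^\infty(G)}|E|^{1/2}|F|^{1/2}$ using Plancherel and \eqref{mu2 estimate}, optimise in $\rho$, and then interpolate against the trivial $L^1$ and $L^\infty$ bounds coming from $\|\mu\|=1$. The only differences are cosmetic (you spell out the Cauchy--Schwarz/Plancherel step for the $\mu_2$ term and the endpoint-exclusion bookkeeping slightly more explicitly than the paper does).
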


This is a partial extension of a classical generalised Radon transform estimate due to Littman \cite{Littman1973}. The latter treats the case where $\mu$ is a smooth, compactly support density supported on a hypersurface in $\R^n$, under the assumption that the hypersurface has non-vanishing Gaussian curvature on the support of $\mu$. In this case Littman \cite{Littman1973} establishes \eqref{abstract convolution} for the sharp range $(1/r, 1/s) \in \mathcal{T}$, including the $(1/r_0, 1/s_0)$ endpoint. Lemma \ref{abstract convolution lemma} is known to hold (together with the endpoint estimate) in Euclidean space for general measures satisfying (R$\mu$) and (F$\mu$), although as far as the authors are aware this has not appeared in print (see, however, \cite{Gressman}). The finite field case has also been studied \cite{Carbery2008}.

\begin{proof}[Proof (of Lemma \ref{abstract convolution lemma})] 
Since $\mu$ is a probability measure it follows that the convolution operator is bounded on $L^r(\widehat{G})$ for all $1 \leq r \leq \infty$. It therefore suffices to prove that $f \mapsto f \ast \mu$ satisfies a restricted weak-type $(r_0, s_0)$ inequality. 

Decompose $\mu$ by writing $\mu = \mu_1 + \mu_2$ where the $\mu_j$ are as defined in \eqref{mu decomposition}. Once again, $\rho > 0$  is a fixed value chosen so as to satisfy the later requirements of the proof. 

For Borel sets $E, F \subset G$ it follows that
\begin{align*}
\langle \mu \ast \chi_E\,,\, \chi_F \rangle &\leq \|\mu_1 \ast \chi_E\|_{L^{\infty}(\widehat{G})} \hat{m}(F) +  \|\mu_2 \ast \chi_E\|_{L^2(\widehat{G})} \hat{m}(F)^{1/2}  \\
&\leq \|\mu_1\|_{L^{\infty}(\widehat{G})} \hat{m}(E)\hat{m}(F) + \|\check{\mu}_2\|_{L^{\infty}(G)} \hat{m}(E)^{1/2}\hat{m}(F)^{1/2}.
\end{align*}
The proof of Lemma \ref{restricted weak type lemma}, and in particular \eqref{mu1 estimate} and \eqref{mu2 estimate},  now imply that 
\begin{equation*}
\langle \mu \ast \chi_E\,,\, \chi_F \rangle \lesssim_{n,a}(C_1 + C_2)A\rho^{n-a}m(E)m(F) + B\rho^{-b/2}m(E)^{1/2}m(F)^{1/2}.
\end{equation*}
Thus, choosing $\rho$ so that $\rho^{n-a+b/2} \sim_{n,a} (C_1+C_2)^{-1}A^{-1}B m(E)^{-1/2}m(F)^{-1/2}$, one concludes that
\begin{equation*}
\langle \mu \ast \chi_E\,,\, \chi_F \rangle \lesssim_{n,a,b}(C_1+C_2)^{1-\theta}A^{1-\theta}B^{\theta} m(E)^{r_0}m(F)^{s_0'}
\end{equation*}
for $\theta$ as defined in \eqref{theta}, as required. 
\end{proof}

As a final remark, when $G = \R^n$ the strong-type $(r_0,s_0)$ estimate can be obtained for $f \mapsto f \ast \mu$ by augmenting the above argument with standard inequalities for the Littlewood-Paley square function. It would be interesting to understand whether the endpoint estimate holds in general, given that the spaces in question do not fall under any existent Calder\'on-Zygmund theory.




\bibliography{Reference}
\bibliographystyle{amsplain}
\end{document}